\newcommand{\AND}{\qquad\text{and}\qquad}
\newcommand{\mat}[4]{\begin{pmatrix}#1&#2\\#3&#4\end{pmatrix}}
\newcommand{\tmat}[4]{\left(\begin{smallmatrix}#1&#2\\#3&#4\end{smallmatrix}\right)}
\newcommand{\COMMA}{,\quad}
\newcommand{\R}{\mathbb{R}}
\newcommand{\Q}{\mathbb{Q}}
\newcommand{\Z}{\mathbb{Z}}
\newcommand{\N}{\mathbb{N}}
\newcommand{\GAP}{\textsf{GAP}~\cite{GAP4}\xspace}
\numberwithin{equation}{section}
\newtheorem{thm}[equation]{Theorem}
\newtheorem{cor}[equation]{Corollary}
\newtheorem{conj}[equation]{Conjecture}
\theoremstyle{definition}
\newcommand{\libsemigroups}{\textsf{libsemigroups}~\cite{Mitchell2020aa}\xspace}
\title{Generating the monoid of $2 \times 2$ matrices over max-plus and
  min-plus semirings}
\author{James East, Julius Jonu\v sas, and J. D. Mitchell}
\subjclass{15A80, 16Y60, 15B33, 15A30 (primary)} 
\date{\today}
\begin{document}

\begin{abstract}
  In this short note, we describe generating sets for the monoids of consisting
  of all $2 \times 2$ matrices over certain finite tropical semirings. 
\end{abstract}

\maketitle

A \textit{semigroup} is a set $S$ with an associative binary operation $*$. A
semigroup $S$ has an \textit{identity} if there exists $e\in S$ such that that
$e*s = s*e = s$ for all $s\in S$. A semigroup with identity is called a
\textit{monoid}.  The semigroup $S$ has a \textit{zero} if there exists $z\in
S$ such that $z*s=s*z=z$ for all $s\in S$. A semigroup $S$ is
\textit{commutative} if $s*t = t*s$ for all $s, t\in S$. 

A \textit{semiring} is a set $K$ equipped with two binary operations, which we
will denote by $\oplus$ and $\otimes$, and two distinguished elements $0, 1\in
K$,  such that the following conditions hold:

\begin{enumerate}[(i)]

  \item 
    $(K, \oplus)$ is a commutative monoid with identity $0$ (called the
    \textit{zero} of $K$);

  \item 
    $(K, \otimes)$ is a monoid with identity $1$ (called the \textit{one} of
    $K$);

  \item 
    $x\otimes (y \oplus z) = (x\otimes y) \oplus (x \otimes z)$ and 
    $(x \oplus y)\otimes z = (x\otimes z) \oplus (y \otimes z)$ for all $x, y,
    z \in K$;

  \item 
    $0\otimes x = x \otimes 0 = 0$ for all $x\in K$. 

\end{enumerate}

The simplest examples of semirings are rings themselves, such as $\Z$, $\Q$,
$\R$ with the usual addition and multiplication. An example of semiring that is
not a ring is the boolean semiring $\mathbb{B} = \{0, 1\}$ with operations
$\oplus$ and $\otimes$ defined by:
\begin{equation*}
  \begin{array}{c|cc}
    \oplus & 0 & 1 \\
    \hline
    0      & 0 & 1 \\
    1      & 1 & 1 
  \end{array}\AND
  \begin{array}{c|cc}
    \otimes & 0 & 1 \\
    \hline
    0       & 0 & 0 \\
    1       & 0 & 1 
  \end{array}.
\end{equation*}
The \textit{min-plus semiring} is defined to be the set $\N \cup \{\infty\}$
with the operations $\oplus = \min$ and $\otimes = +$, where $\otimes$ extends
the usual addition on $\N$ in a natural way, so that
$$x \otimes \infty = \infty \otimes x = \infty  \quad \text{for all}\quad x\in
\N\cup \{\infty\}.$$
We will denote the min-plus semiring by $K^{\infty}$. The one of $K^{\infty}$
is $0$ and the zero is $\infty$.  The min-plus semiring was first introduced by
Simon~\cite{Simon1978aa} in the context of automata theory; see also
Pin~\cite{Pinab}.  

The \textit{max-plus semiring} is defined to be the set $\N \cup \{-\infty\}$
with the operations $\oplus = \max$ and $\otimes = +$, where $\otimes$ extends
the usual addition on $\N$ in a natural way, so that $$x \otimes -\infty =
-\infty \otimes x = -\infty  \quad \text{for all}\quad x\in \N\cup
\{-\infty\}.$$ We will denote the max-plus semiring by $K^{-\infty}$. The one
of $K^{-\infty}$ is $0$ and the zero is $-\infty$.  A variant of the max-plus
semiring was first introduced by Mascle~\cite{Mascle1986} (Mascle's semiring
included both $\infty$ and $-\infty$).  The monoid of $2\times 2$ max-plus
matrices with entries in $\R\cup \{-\infty\}$ has also been studied in detail;
see~\cite{Johnson2011aa}.

Both the min-plus and max-plus semirings, and several others which we will not be
concerned with here, are referred to as \textit{tropical semirings}. 
A \textit{congruence} on a semiring $K$ is just an equivalence relation
$\rho\subseteq K\times K$ that is compatible with $\oplus$ and $\otimes$.
More specifically, if $(a, b), (c, d)\in\rho$, then 
$$(a \oplus c, b \oplus d), (a\otimes c, b\otimes d) \in \rho.$$

We are also concerned with the following finite
quotients of the semirings $K ^ {\infty}$ and $K ^ {-\infty}$:
$$K^{\infty}_t = K^{\infty} / (t = t + 1)\AND
  K^{-\infty}_t = K^{-\infty} / (t = t + 1)$$
where $t \in \N$ and $(t = t + 1)$ denotes the least congruence such that $t$
and $t + 1$ are equivalent. 

More explicitly, if $t\in \N$, then 
$$K^{\infty}_t = \{0, \ldots, t, \infty\}$$
and the operations on $K^{\infty}_t$ are defined by 
\begin{equation*}
  a \oplus b = \min\{a, b\}
  \AND
  a\otimes b = 
  \begin{cases}
    \min\{a + b,\ t\} & \text{if } a, b \not= \infty \\
    \infty            & \text{if } a\ \text{or}\ b = \infty.
  \end{cases}
\end{equation*}
Similarly, 
$$K^{-\infty}_t = \{0, \ldots, t, -\infty\}$$
and the operations on $K^{-\infty}_t$ are defined by 
\begin{equation*}
  a \oplus b = \max\{a, b\}\quad\text{and}\quad a\otimes b =  \min(a + b, t).
\end{equation*}

We denote by $M_n(K)$ the semigroup of $n \times n$ matrices with entries in a
semiring $(K, \oplus, \otimes)$, and the usual multiplication of matrices with
respect to $\oplus$ and $\otimes$.  Since every semiring $K$ has a one and a
zero, $M_n(S)$ is a monoid whose identity element is just the usual identity
matrix (with ones of the semiring on the diagonal and zeros elsewhere). 

We will find generating sets for $M_2(K^{\infty})$ and
$M_2(K^{-\infty})$, and hence also for their finite quotients
$M_2(K^{\infty}_t)$ and $M_2(K^{-\infty}_t)$ for all $t\in \N$. 

It is possible to compute subsemigroups of the monoids $M_n(K^{\infty}_t)$ and
$M_n(K^{-\infty}_t)$ defined by a generating set, for some relatively
small values of $n$ and $t$, using the C++ library \libsemigroups or 
Semigroupe~\cite{Pin2009aa} (a C program by
Jean-Eric Pin). 
The motivation for finding generating
sets for the entire monoids $M_n(K^{\infty}_t)$ and $M_n(K^{-\infty}_t)$ stems
from the second and third authors experiments when they reimplemented the
Froidure-Pin Algorithm~\cite{Froidure1997aa} in \libsemigroups.


\section{Min-plus matrices}

In this section, we are concerned with the monoid $M_2(K^{\infty})$ of $2\times
2$ matrices over the min-plus semiring $K^{\infty}$ and its finite quotients
$K^{\infty}_t$ for all $t\in \N$. 
The identity of $M_2(K^{\infty})$ is 
\begin{equation*}
  \begin{pmatrix}
    0      & \infty \\
    \infty & 0      
  \end{pmatrix}.
\end{equation*}

\begin{thm}
  \label{thm-min-plus}
  The monoid $M_2(K^{\infty})$ of $2 \times 2$ min-plus matrices is
  generated by the matrices:
  \begin{equation*}
    A_i = 
    \begin{pmatrix}
      i & 0      \\
      0 & \infty
    \end{pmatrix},
    \quad
    B = 
    \begin{pmatrix}
      1      & \infty \\
      \infty & 0
    \end{pmatrix},
    \quad 
    \text{and}
    \quad
    C = 
    \begin{pmatrix}
      \infty & \infty \\
      \infty & 0
    \end{pmatrix}
  \end{equation*}
  where $i\in \N\cup \{\infty\}$.
\end{thm}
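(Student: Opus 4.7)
The plan is to exhibit each $M \in M_2(K^\infty)$ as an explicit product of the proposed generators. I begin by introducing the auxiliary matrix $B' := A_\infty B A_\infty$, which is in the semigroup since $A_\infty$ is among the $A_i$. A short calculation gives
\begin{equation*}
  B^k = \tmat{k}{\infty}{\infty}{0}\AND
  (B')^k = \tmat{0}{\infty}{\infty}{k} \qquad\text{for } k\in\N,
\end{equation*}
and it is convenient to set $B^\infty := C$ and $(B')^\infty := A_\infty C A_\infty = \tmat{0}{\infty}{\infty}{\infty}$, both of which are in the semigroup. The identity matrix equals $A_\infty^2$.

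The first identity to verify is
\begin{equation*}
  (B')^c\, A_a\, (B')^b \;=\; \mat{a}{b}{c}{\infty}
  \qquad\text{for every } a, b, c \in \N \cup \{\infty\},
\end{equation*}
a routine min-plus computation (with the extended interpretations when an exponent is $\infty$). This already produces every $M$ whose $(2,2)$-entry is $\infty$, and pre- or post-multiplying by $A_\infty$ then covers every $M$ containing at least one $\infty$-entry, by first moving that entry into position $(2,2)$ via a row/column swap.

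For an all-finite $M = \mat{\alpha}{\beta}{\gamma}{\delta}$, I would append one more factor: under the hypotheses $\gamma \geq \delta$ and $\alpha + \delta \leq \beta + \gamma$, a direct computation gives
\begin{equation*}
  (B')^\delta\, A_\beta\, (B')^\alpha\, A_{\gamma-\delta} \;=\; \mat{\alpha}{\beta}{\gamma}{\delta}.
\end{equation*}
Indeed, the first three factors form $\tmat{\beta}{\alpha}{\delta}{\infty}$ by the previous identity, and multiplication by $A_{\gamma-\delta}$ on the right deposits $\delta$ into position $(2,2)$ while leaving $\beta$ and $\gamma$ intact; the assumption $\alpha + \delta \leq \beta + \gamma$ is precisely what makes the minimum computed at position $(1,1)$ equal to $\alpha$ rather than $\beta + \gamma - \delta$. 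An arbitrary all-finite $M$ is then reduced to this case via the Klein four-group of row/column swaps: at least one of the four variants $M$, $A_\infty M$, $MA_\infty$, $A_\infty M A_\infty$ satisfies both hypotheses, so $M$ is recovered by sandwiching the resulting factorisation between at most two copies of $A_\infty$.

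The main obstacle is locating the right normal form $(B')^c A_a (B')^b$ and recognising that a single extra right-multiplication by $A_i$ suffices to install the missing $(2,2)$-entry, at the price of a tropical-singularity-style inequality $\alpha + \delta \leq \beta + \gamma$. Once this is in hand, the remaining case analysis is short: if neither $M$ nor $A_\infty M A_\infty$ satisfies both conditions then $\alpha + \delta > \beta + \gamma$, which forces one of $A_\infty M$ or $MA_\infty$ to succeed, and no further subtleties arise.
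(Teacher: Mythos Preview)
Your argument is correct. The identity $(B')^{c}A_{a}(B')^{b}=\tmat{a}{b}{c}{\infty}$ does hold for all $a,b,c\in K^{\infty}$ (with your conventions $(B')^{\infty}=A_{\infty}CA_{\infty}$), and the extra factor $A_{\gamma-\delta}$ does what you claim under the hypotheses $\gamma\geq\delta$ and $\alpha+\delta\leq\beta+\gamma$. Your Klein-four reduction is also sound: if both $M$ and $A_{\infty}MA_{\infty}$ fail, then necessarily $\alpha+\delta>\beta+\gamma$, and then at least one of $A_{\infty}M$, $MA_{\infty}$ succeeds (otherwise $\alpha<\beta$ and $\delta<\gamma$, contradicting $\alpha+\delta>\beta+\gamma$). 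One small imprecision: when the single $\infty$ sits in position $(1,1)$ you need \emph{both} a row and a column swap, so ``pre- or post-multiplying by $A_{\infty}$'' should read ``pre- and/or post-multiplying''; this is clearly what you intend, given your later explicit use of the Klein four-group.

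The route differs from the paper's. The paper treats the $\infty$-containing matrices by several separate identities built from $B$, $C$, and $A_{\infty}A_{k}A_{\infty}$, and for the all-finite case it normalises so that the $(1,1)$-entry is the minimum, strips off powers of $B$ and a copy of $A_{\infty}$, and reduces to the two shapes $\tmat{0}{i}{j}{0}=A_{i}A_{j}$ and $\tmat{0}{0}{i}{j}$. Your approach instead packages everything into a single normal form $(B')^{c}A_{a}(B')^{b}$ for the $\infty$-case and a single appended $A_{i}$ for the finite case, paying with a tropical-determinant inequality that the Klein-four action always arranges. Your version is more uniform and yields shorter words in the generators in typical cases; the paper's version is more hands-on but makes each step a one-line computation and avoids introducing the auxiliary $B'$.
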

\begin{proof}
  Pre-multiplying any matrix $X$ in $M_2(K^{\infty})$ by $A_{\infty}$ swaps the
  rows of $X$, and post-multiplying $X$ by $A_{\infty}$ swaps the columns of
  $X$.  Hence it suffices to express one representative of every matrix in 
  $M_2(K^{\infty})$ up to rearranging the rows and the columns. 

  It is routine to verify the following equalities:
  \begin{equation*}\label{eq-annihilate}
    \begin{pmatrix}
      a & b \\
      c & d
    \end{pmatrix}
    C 
    = 
    \begin{pmatrix}
      \infty & b \\
      \infty & d
    \end{pmatrix}
    \quad
    \text{and}
    \quad
    C 
    \begin{pmatrix}
      a & b \\
      c & d
    \end{pmatrix}
    = 
    \begin{pmatrix}
      \infty & \infty \\
      c      & d
    \end{pmatrix}
  \end{equation*}
  and 
  \begin{equation*}\label{eq-add-1}
    \begin{pmatrix}
      a & b \\
      c & d
    \end{pmatrix}
    B 
    = 
    \begin{pmatrix}
      a \otimes 1 & b \\
      c \otimes 1 & d
    \end{pmatrix}
    \quad
    \text{and}
    \quad
    B 
    \begin{pmatrix}
      a & b \\
      c & d
    \end{pmatrix}
    = 
    \begin{pmatrix}
      a \otimes 1 & b \otimes 1\\
      c     & d
    \end{pmatrix}.
  \end{equation*}

  If $i\in K^{\infty}$ is arbitrary, then 
  \begin{equation*}
    \begin{pmatrix}
      \infty & \infty \\
      \infty & i
    \end{pmatrix}
    = 
    C\ 
    \begin{pmatrix}
      \infty & 0 \\
      0      & i
    \end{pmatrix}\ 
    C
    = 
    C A_{\infty} A_i A_{\infty} C,
  \end{equation*}
  and in this way it is possible to generate every matrix in $M_2(K^{\infty})$
  containing three or four occurrences of $\infty$, using the given matrices.

  If $i, j\in K^{\infty}\setminus\{\infty\}$ and $k\in K^{\infty}$ are
  arbitrary, then 
  \begin{eqnarray*}
    \begin{pmatrix}
      \infty & i \\
      \infty & j
    \end{pmatrix}
    & = &
    B ^ i\ 
    \begin{pmatrix}
      \infty & 0 \\
      \infty & j
    \end{pmatrix}\ 
    = 
    B ^ i\ 
    \begin{pmatrix}
      \infty & 0 \\
      0      & j
    \end{pmatrix}\ 
    C
    = 
    B ^ i A_{\infty} A_j A_{\infty} C,
    \\
    \begin{pmatrix}
      \infty & \infty \\
      i      & j
    \end{pmatrix}
    & = &
    \begin{pmatrix}
      \infty & \infty \\
      0      & j
    \end{pmatrix}\ 
    B ^ i
    = 
    C
    \begin{pmatrix}
      \infty & 0 \\
      0      & j
    \end{pmatrix}\ 
    B ^ i\ 
    = 
    C A_{\infty} A_j A_{\infty} B ^ i,\\
    \begin{pmatrix}
      \infty & i \\
      j      & k
    \end{pmatrix}
    & = &
    B ^ i\ 
    \begin{pmatrix}
      \infty & 0 \\
      0      & k
    \end{pmatrix}\ 
    B ^ j
    = 
    B ^ i A_{\infty} A_k A_{\infty} B ^ j.
  \end{eqnarray*}
  (Note that the third equation gives $\tmat\infty i j \infty=B^iA_\infty
  B^j$.) Hence every matrix in $M_2(K^{\infty})$ with at least one occurrence
  of $\infty$ can be expressed as a product of the given matrices.

  Suppose that $a, b, c, d\in K^{\infty} \setminus \{\infty\}$. We may further
  suppose without loss of generality that $a = \min\{a,b,c,d\}$. Then 
  \begin{equation*}
    \begin{pmatrix}
      a & b \\
      c & d
    \end{pmatrix}
    = 
    \begin{cases}
      \begin{pmatrix}
        0     & 0 \\
        d - b & c- a
      \end{pmatrix}
       B ^ b A_{\infty} B ^ a & \text{if } b\leq d \\
      \begin{pmatrix}
        b - d & 0     \\
        0     & c - a 
      \end{pmatrix}
       B ^ d A_{\infty} B ^ a & \text{if } b > d. \\
    \end{cases}
  \end{equation*}
  It therefore suffices to note that if $i, j\in K^{\infty} \setminus
  \{\infty\}$ and
  $i \geq j$, then 
  \begin{equation*}
    \begin{pmatrix}
      0 & i \\
      j & 0
    \end{pmatrix}
    = 
    A_i A_j
    \quad \text{and} \quad
    \begin{pmatrix}
      0 & 0 \\
      i & j
    \end{pmatrix}
    = 
    A_{\infty} B ^ j A_{\infty} 
    \begin{pmatrix}
      0     & 0 \\
      i - j & 0
    \end{pmatrix}
    = 
    A_{\infty} B ^ j A_{\infty} A_0 A_{i - j}.\qedhere
  \end{equation*}
\end{proof}

\begin{cor}
  \label{cor-finite-min-plus}
  Let $t\in \N$ be arbitrary. Then the finite monoid $M_2(K^{\infty}_t)$ of $2
  \times 2$ min-plus matrices is generated by the $t + 4$ matrices:
  \begin{equation*}
    A_i = 
    \begin{pmatrix}
      i & 0      \\
      0 & \infty
    \end{pmatrix},
    \quad
    B = 
    \begin{pmatrix}
      1      & \infty \\
      \infty & 0
    \end{pmatrix},
    \quad 
    \text{and}
    \quad
    C = 
    \begin{pmatrix}
      \infty & \infty \\
      \infty & 0
    \end{pmatrix}
  \end{equation*}
  where $i\in \{0, 1, \ldots, t, \infty\}$.
\end{cor}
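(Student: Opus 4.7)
The plan is to deduce the corollary from Theorem~\ref{thm-min-plus} by pushing the generators through the natural surjective monoid homomorphism $\pi : M_2(K^{\infty}) \to M_2(K^{\infty}_t)$ induced entrywise by the quotient semiring map $K^{\infty} \to K^{\infty}_t$. First I would verify that $\pi$ is well-defined and surjective: the semiring quotient by the congruence $(t=t+1)$ is a surjection of semirings that fixes each of $0, 1, \ldots, t-1, \infty$ and sends every $i \geq t$ to $t$, and since matrix addition and multiplication are defined solely in terms of $\oplus$ and $\otimes$ on the underlying semiring, this map extends entrywise to a surjective monoid homomorphism on $2 \times 2$ matrices.

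Next I would invoke the standard fact that the image of a generating set under a surjective monoid homomorphism is again a generating set. Combined with Theorem~\ref{thm-min-plus}, this immediately gives that the set
\[
\pi(\{A_i : i \in \N \cup \{\infty\}\}) \cup \{\pi(B), \pi(C)\}
\]
generates $M_2(K^{\infty}_t)$.

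Finally I would identify this image concretely. Since $\pi$ replaces each finite entry $i \geq t$ by $t$ and fixes $\infty$ and every finite entry $<t$, we have $\pi(A_i) = A_{\min\{i,t\}}$ for $i \in \N$ and $\pi(A_\infty) = A_\infty$, while $\pi(B) = B$ and $\pi(C) = C$ (their nontrivial entries $0$, $1$, $\infty$ are all already fixed by $\pi$). Thus the image is exactly $\{A_0, A_1, \ldots, A_t, A_\infty, B, C\}$, which is a set of $t+4$ matrices, as claimed. There is no substantive obstacle here; the argument is a direct and essentially bookkeeping consequence of Theorem~\ref{thm-min-plus} together with the construction of $K^{\infty}_t$ as a quotient of $K^{\infty}$.
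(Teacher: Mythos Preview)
Your proposal is correct and is exactly the argument the paper has in mind: the corollary is stated without proof, and the introduction explicitly says that generating sets for $M_2(K^{\infty})$ yield generating sets ``hence also for their finite quotients $M_2(K^{\infty}_t)$,'' i.e.\ by pushing Theorem~\ref{thm-min-plus}'s generators through the entrywise quotient homomorphism, precisely as you describe.
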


We do not know if the generating set in Corollary~\ref{cor-finite-min-plus} has
minimal size. However, it has been verified computationally for small values of
$t$, that the generating set in Corollary~\ref{cor-finite-min-plus} is
irredundant, in the sense none of the generators belongs to the semigroup
generated by the other generators. 

While knowing a generating set for the $2\times 2$ matrices is good, it would,
of course, be much better to have a generating set for the $n\times n$ matrices
for all $n \in \N$, $n > 2$. It seems unlikely that a case-by-case approach
(such as that performed in the proof of Theorem~\ref{thm-min-plus}) will be
successful for values of $n$ greater than $2$.  However, the following
conjecture is suggested by our computational experiments. The conjecture has
been verified when $t = 0, 1, \ldots, 5$.

\begin{conj}
  Let $t\in \N$ be arbitrary. Then the monoid $M_3(K^{\infty}_t)$ of $3 \times
  3$ min-plus matrices is generated by the $(2t^3 + 9t^2 + 19t + 36)/6$
  matrices:
  \begin{equation*}
    \begin{array}{l}
      \begin{pmatrix}
        \infty & \infty & 0      \\
        0      & \infty & \infty \\
        \infty & 0      & \infty \\
      \end{pmatrix},
      \quad
      \begin{pmatrix}
        \infty & \infty & 0      \\
        \infty & 0      & \infty \\
        0      & \infty & \infty \\
      \end{pmatrix},
      \quad
      \begin{pmatrix}
        \infty & \infty & \infty \\
        \infty & 0      & \infty \\
        \infty & \infty & 0      \\
      \end{pmatrix},
      \quad
      \begin{pmatrix}
        1      & \infty & \infty \\
        \infty & 0      & \infty \\
        \infty & \infty & 0      \\
      \end{pmatrix},
      \\
      \begin{pmatrix}
        0      & i      & \infty \\
        \infty & 0      & \infty \\
        \infty & \infty & 0      \\
      \end{pmatrix},
      \quad
      \begin{pmatrix}
        0      & i      & \infty \\
        \infty & 0      & i      \\
        i      & \infty & 0      \\
      \end{pmatrix}, 
      \quad
      i \in \{0, \ldots, t\}
      \\
      \begin{pmatrix}
        \infty & 0      & 0      \\
        0      & \infty & i      \\
        0      & j      & \infty \\
      \end{pmatrix}, 
      \quad
      i \in \{1, \ldots, t\},
      \quad 
      j \in \{1, \ldots, i\}
      \\
      \begin{pmatrix}
        0      & \infty & 0      \\
        i      & 0      & \infty \\
        \infty & j      & 0      \\
      \end{pmatrix}, 
      \quad
      i \in \{0, \ldots, t\},
      \quad 
      j \in \{1, \ldots, t\}
      \\
      \begin{pmatrix}
        0      & i      & \infty \\
        \infty & 0      & j      \\
        k      & \infty & 0      \\
      \end{pmatrix}, 
      \quad
      i \in \{1, \ldots, t\},
      \quad 
      j \in \{i, \ldots, t\},
      \quad 
      k \in \{1, \ldots, j - 1\}.
    \end{array}
  \end{equation*}
\end{conj}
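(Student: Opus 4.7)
The plan is to mirror the strategy of Theorem~\ref{thm-min-plus}, suitably adapted to the $3 \times 3$ case. First, observe that the two generators whose entries lie in $\{0, \infty\}$ with exactly one $0$ per row and column realize an involution and a $3$-cycle, so products of them generate all six permutation matrices; pre- and post-multiplying by these matrices realizes the full action of $S_3 \times S_3$ on $M_3(K^{\infty}_t)$ by row and column permutations. This reduces the task to expressing one representative of each $S_3 \times S_3$-orbit.

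Next, identify the roles of the remaining non-parametrized generators. The third matrix acts as a ``column-annihilator'': right-multiplication replaces the first column of any matrix with $\infty$, and left-multiplication replaces the first row. The fourth matrix acts as a ``column-incrementer'': right-multiplication adds $1$ (tropically) to the first column, and left-multiplication adds $1$ to the first row. Composing these with permutations yields controlled row/column-indexed analogues of the operations played by $B$ and $C$ in Theorem~\ref{thm-min-plus}.

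The bulk of the proof would then be a case analysis by the pattern of $\infty$-entries of a target matrix $X$. For $X$ with several $\infty$-entries, express it by starting from a canonical matrix in one of the listed families (or a $C$-like generator) and applying the column-annihilator, column-incrementer, and $S_3 \times S_3$ action, in direct analogy with the identities $C A_\infty A_i A_\infty C$, $B^iA_\infty A_j A_\infty C$, and so on from the $2 \times 2$ proof. For $X$ with few $\infty$-entries, normalise by subtracting the minimum entry of each row (or column) via $B$-like pre-multiplications and then match the normalised matrix against one of the explicit parametrized templates indexed by $(i)$, $(i,j)$, or $(i,j,k)$.

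The hard part will be the fully-finite case. In the $2 \times 2$ case a single minimum-subtraction reduces any matrix to one of two canonical shapes; for $3 \times 3$, each entry of a product is a tropical minimum over three terms, so a single ``increment by $1$ along a row/column'' operation couples three product entries at once, and minimum-normalisation produces matrices with a zero in a prescribed position but uncontrolled relations among the remaining six entries. The constraints $i \in \{1,\ldots,t\}$, $j \in \{i,\ldots,t\}$, $k \in \{1,\ldots,j-1\}$ in the last family presumably encode precisely those residual canonical shapes that cannot be further reduced under the available operations, and the resulting count $(2t^3+9t^2+19t+36)/6$ reflects a genuine combinatorial classification. Verifying both sufficiency and irredundancy for this family by brute-force case analysis looks prohibitive; a more promising route, and the one I would pursue, is a structural argument organising the orbits by tropical row and column space dimension, using the heavy-$\infty$ generators to implement rank reduction and the parametrized families as representatives of the top-rank orbits. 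This is also consistent with the authors' remark that the direct $2 \times 2$-style approach is unlikely to scale.
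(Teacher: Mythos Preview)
The statement you are attempting to prove is labelled a \emph{Conjecture} in the paper, not a theorem: the authors give no proof, only computational verification for $t\in\{0,1,\ldots,5\}$, and they explicitly remark that ``it seems unlikely that a case-by-case approach (such as that performed in the proof of Theorem~\ref{thm-min-plus}) will be successful for values of $n$ greater than $2$.'' There is therefore no paper proof to compare your proposal against.

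Your proposal is not a proof either, and to your credit you say so yourself. The first three paragraphs correctly identify the roles of the four non-parametrized generators (permutations, annihilator, incrementer) and correctly set up the reduction to $S_3\times S_3$-orbit representatives; this much is sound and directly parallels the $2\times2$ argument. But the fourth paragraph is where the actual content would have to live, and there you concede that the fully-finite case ``looks prohibitive'' and propose instead an unspecified ``structural argument organising the orbits by tropical row and column space dimension.'' That is a research direction, not a proof step: you have neither shown that every finite-entry matrix reduces to one of the listed templates, nor that the templates are themselves irredundant. Until that case is handled the statement remains open, exactly as the paper presents it.
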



\section{Max-plus matrices}

In this section, we are concerned with the monoid $M_2(K^{-\infty})$ of $2\times
2$ matrices over the max-plus semiring $K^{-\infty}$ and its finite quotients
$K^{-\infty}_t$ for all $t\in \N$. 

The identity of $M_2(K^{-\infty})$ is 
\begin{equation*}
  \mat0{-\infty}{-\infty}0.
\end{equation*}

\begin{thm}
  The monoid $M_2(K^{-\infty})$ of $2 \times 2$ max-plus matrices is
  generated by the matrices 
  \[
  X_i = \mat i00{-\infty} \COMMA
  Y = \mat 1{-\infty}{-\infty}0 \COMMA
  Z = \mat {-\infty}{-\infty}{-\infty}0 \COMMA
  W_{jk} = \mat 0jk0 ,
  \]
  where $i\in\N\cup \{-\infty\}$ and $j, k\in\N$ such that $0< j \leq k$.
\end{thm}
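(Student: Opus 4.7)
The plan is to adapt the three-step structure of the proof of Theorem~\ref{thm-min-plus} to the max-plus setting. The first step is to verify the elementary identities describing how $X_{-\infty}$, $Y$, and $Z$ act on an arbitrary matrix $\tmat abcd$: pre-multiplying by $X_{-\infty}$ swaps the two rows and post-multiplying swaps the two columns; pre-multiplying (respectively, post-multiplying) by $Y$ adds $1$ to the first row (respectively, first column) in the max-plus sense; and pre-multiplying (respectively, post-multiplying) by $Z$ replaces the first row (respectively, first column) with $(-\infty,-\infty)$. These are the precise analogues of the min-plus identities, with $X_{-\infty}$, $Y$, $Z$, and $X_i$ corresponding to $A_\infty$, $B$, $C$, and $A_i$.

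The second step handles every matrix in $M_2(K^{-\infty})$ that contains at least one entry equal to $-\infty$ by copying the min-plus argument essentially verbatim. Matrices with three or four occurrences of $-\infty$ arise from $ZX_{-\infty}X_iX_{-\infty}Z$, while those with exactly one or two occurrences of $-\infty$ come from the three templates $Y^iX_{-\infty}X_jX_{-\infty}Z$, $ZX_{-\infty}X_jX_{-\infty}Y^i$, and $Y^iX_{-\infty}X_kX_{-\infty}Y^j$, together with their $X_{-\infty}$-conjugates to cover every possible position of the $-\infty$ entries.

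The third step is the main new case, treating matrices $M$ with all four entries in $\N$. The obstruction to reusing the min-plus strategy is that $A_iA_j=\tmat 0ij0$ has no max-plus counterpart: instead $X_iX_j = \tmat{i+j}{i}{j}{0}$, so the $(1,1)$ entry accumulates rather than being absorbed to $0$. Consequently, matrices with a $0$ on the diagonal cannot be produced from the $X_i$ alone, and they are supplied directly by the $W_{jk}$. The two families $X_iX_j = \tmat{b+c}{b}{c}{0}$ and $Y^pW_{jk}Y^q = \tmat{p+q}{j+p}{k+q}{0}$ (with $j\geq 1$, $k\geq j$, and $p,q\geq 0$) together cover a large class of all-finite matrices having a zero at some corner; after placing a $0$ at position $(2,2)$ via $X_{-\infty}$-conjugation and, if needed, swapping the roles of $j$ and $k$ by a further conjugation, one reduces the question to an elementary search for non-negative integers satisfying a few simple inequalities.

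The main obstacle is the boundary cases that fall outside these two families --- for example, all-finite matrices of the form $\tmat a0c0$, or those whose ``defect'' $M_{12}+M_{21}-M_{11}$ equals $1$, which is neither the value $0$ produced by $X_iX_j$ nor the value $\geq 2$ produced by $Y^pW_{jk}Y^q$. For these I expect to use longer factorisations involving $X_0$ and the identities $X_0Y^nX_0 = \tmat nnnn$: multiplication by $X_0$ overwrites one column (or row) of a matrix with the adjacent column (or row), so that an intermediate matrix with a different zero pattern (produced either in the second step or as a conjugated $W_{jk}$) gets converted into $M$. A careful enumeration of the remaining all-finite matrices --- organised by which off-diagonal entries vanish and by the sign of the defect --- should show that this handles every case, completing the proof.
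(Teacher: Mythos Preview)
Your first two steps match the paper exactly. The gap is in step three, and it stems from a misidentified obstruction.

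You say that ``the obstruction to reusing the min-plus strategy is that $A_iA_j=\tmat 0ij0$ has no max-plus counterpart.'' But the min-plus reduction of an all-finite matrix $\tmat abcd$ (with $a=\min\{a,b,c,d\}$) to one of the forms $\tmat 0ij0$ or $\tmat00ij$ uses only $B$ and $A_\infty$, not the $A_i$; it therefore transfers verbatim to max-plus with $Y$ and $X_{-\infty}$ in their place. The identity $A_iA_j=\tmat0ij0$ is invoked only at the very last step, to express the already-reduced matrix $\tmat0ij0$; in the max-plus setting that matrix is, up to an $X_{-\infty}$-conjugation arranging the off-diagonal entries in the required order, precisely the generator $W_{ij}$. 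This is the whole reason the $W_{jk}$ appear in the generating set. The other reduced form $\tmat00ij$ is handled, just as in min-plus, by factoring through matrices containing a $-\infty$ entry, which your step two already covers.

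Consequently your detour via $X_iX_j$, $Y^pW_{jk}Y^q$ and the ``defect'' $b+c-a$ is unnecessary, and as written it is incomplete. Your two families only produce matrices with a $0$ in some corner, so all-finite matrices with every entry positive are not reached (the single identity $X_0Y^nX_0=\tmat nnnn$ yields only constant matrices), and the promised ``careful enumeration'' of the residual cases is never carried out. Note also that right-multiplication by $X_0$ does not ``overwrite one column with the adjacent column'': one computes $\tmat abcd X_0=\tmat{\max(a,b)}a{\max(c,d)}c$, so a maximum is taken in the first column rather than a straight copy, and the subsequent sketch relying on that description would need to be reworked even if the enumeration were attempted.
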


\begin{proof}
  Pre-multiplying any matrix $A$ in
  $M_2(K^{-\infty})$ by $X_{-\infty}$ swaps the
  rows of $A$, and post-multiplying $A$ by $X_{-\infty}$ swaps the columns of
  $A$.  Hence it suffices to express one representative of every matrix in 
  $M_2(K^{-\infty})$ up to rearranging the rows and the columns, as a product
  of the given generators. 

  It is routine to verify the following equalities:
  \begin{equation*}\label{eq-annihilate'}
    \begin{pmatrix}
      a & b \\
      c & d
    \end{pmatrix}
    Z 
    = 
    \begin{pmatrix}
      -\infty & b \\
      -\infty & d
    \end{pmatrix}
    \quad
    \text{and}
    \quad
    Z 
    \begin{pmatrix}
      a & b \\
      c & d
    \end{pmatrix}
    = 
    \begin{pmatrix}
      -\infty & -\infty \\
      c       & d 
    \end{pmatrix}
  \end{equation*}
  and 
  \begin{equation*}\label{eq-add-1'}
    \begin{pmatrix}
      a & b \\
      c & d
    \end{pmatrix}
    Y 
    = 
    \begin{pmatrix}
      a \otimes 1 & b \\
      c \otimes 1 & d
    \end{pmatrix}
    \quad
    \text{and}
    \quad
    Y 
    \begin{pmatrix}
      a & b \\
      c & d
    \end{pmatrix}
    = 
    \begin{pmatrix}
      a \otimes 1 & b \otimes 1\\
      c     & d
    \end{pmatrix}.
  \end{equation*}

  If $i\in K^{-\infty}$ is arbitrary, then 
  \begin{equation*}
    \begin{pmatrix}
      -\infty & -\infty \\
      -\infty & i
    \end{pmatrix}
    = 
    Z\ 
    \begin{pmatrix}
      -\infty & 0 \\
      0       & i
    \end{pmatrix}\ 
    Z
    = 
    ZX_{-\infty}X_iX_{-\infty}Z,
  \end{equation*}
  and in this way it is possible to generate every matrix in $M_2(K^{-\infty})$
  containing three or four occurrences of $-\infty$, using the given matrices.

  If $i, j\in K^{-\infty}\setminus\{-\infty\}$ and $k\in K^{-\infty}$ are
  arbitrary, then 
  \begin{eqnarray*}
    \mat {-\infty}{-\infty}ij & = & ZX_{-\infty}X_jX_{-\infty}Y^i,\\
    \mat {-\infty}i{-\infty}j & = & Y^iX_{-\infty}X_jX_{-\infty}Z, \\
    \mat{-\infty}ijk & = & Y^iX_{-\infty}X_kX_{-\infty}Y^j.
  \end{eqnarray*}
  Hence every matrix in $M_2(K^{-\infty})$ with at least one occurrence of
  $-\infty$ can be expressed as a product of the given matrices.

  Suppose that $a, b, c, d\in K^{-\infty} \setminus \{-\infty\}$. We may further
  suppose without loss of generality that $a = \min\{a, b, c, d\}$. Then 
  \begin{equation*}
    \begin{pmatrix}
      a & b \\
      c & d
    \end{pmatrix}
    = 
    \begin{cases}
      \begin{pmatrix}
        0     & 0     \\
        d - b & c - a
      \end{pmatrix}
       Y ^ b X_{-\infty} Y ^ a & \text{if } b\leq d \\
      \begin{pmatrix}
        b - d & 0     \\
        0     & c - a
      \end{pmatrix}
       Y ^ d X_{-\infty} Y ^ a & \text{if } b > d. \\
    \end{cases}
  \end{equation*}
  It therefore suffices to note that if $i, j\in K^{-\infty} \setminus
  \{-\infty\}$ and
  $i \geq j$, then 
  \begin{equation*}
    \begin{pmatrix}
      0 & i \\
      j & 0
    \end{pmatrix}
    = 
    W_{ij}
  \end{equation*}
  and 
  \begin{equation*}
    \begin{pmatrix}
      0 & 0 \\
      i & j
    \end{pmatrix}
    = 
    X_{-\infty}Y^j\mat{i-j}000 =
    X_{-\infty}Y^j\mat00{-\infty}0\mat{i-j}{-\infty}00,
  \end{equation*}
  with both $\tmat00{-\infty}0$ and $\tmat{i-j}{-\infty}00$ being a product of
  the given matrices, since they each have an entry equal to $-\infty$.
\end{proof}

\begin{cor}
  Let $t\in \N$ be arbitrary. Then the finite monoid $M_2(K^{-\infty}_t)$ of $2
  \times 2$ max-plus matrices is generated by the $(t^2+3t+8)/2$ matrices 
  \[
  X_i = \mat i00{-\infty} \COMMA
  Y = \mat 1{-\infty}{-\infty}0 \COMMA
  Z = \mat {-\infty}{-\infty}{-\infty}0 \COMMA
  W_{jk} = \mat 0jk0 ,
  \]
  where $i\in\{-\infty,0,\ldots,t\}$ and $j,k\in\{1,\ldots,t\}$ with $j\leq k$.
\end{cor}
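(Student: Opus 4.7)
The plan is to deduce this corollary from the preceding theorem by applying the quotient homomorphism. The defining congruence of $K^{-\infty}_t$ yields a surjective semiring homomorphism $\pi : K^{-\infty} \to K^{-\infty}_t$, which extends entrywise to a surjective monoid homomorphism $\overline\pi : M_2(K^{-\infty}) \to M_2(K^{-\infty}_t)$. Since the homomorphic image of a generating set is a generating set, it is enough to apply $\overline\pi$ to the generators furnished by the theorem and then discard duplicates.

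Concretely, $\pi(i) = \min(i, t)$ for $i \in \N$ and $\pi(-\infty) = -\infty$, so $\overline\pi(X_i) = X_{\min(i, t)}$ for all $i \in \N \cup \{-\infty\}$, and $\overline\pi(W_{jk}) = W_{\min(j,t), \min(k,t)}$; in the latter, the constraint $1 \leq j \leq k$ transfers to $1 \leq \min(j,t) \leq \min(k,t) \leq t$, so every image still has the required form. The generators $Y$ and $Z$ lie in the image verbatim (up to the harmless collapse of $Y$ to the identity when $t = 0$). Restricting to distinct images therefore gives exactly the list in the statement.

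A routine count then verifies the claimed cardinality: there are $t+2$ matrices of the form $X_i$ with $i \in \{-\infty, 0, \ldots, t\}$, one $Y$, one $Z$, and $|\{(j,k) : 1 \leq j \leq k \leq t\}| = t(t+1)/2$ matrices of the form $W_{jk}$, giving
\[
(t+2) + 1 + 1 + \frac{t(t+1)}{2} = \frac{t^2 + 3t + 8}{2}
\]
in total. There is no real obstacle here; the single point to verify is that semiring congruences lift to congruences on the matrix monoid, which is immediate from the entrywise definition of matrix operations.
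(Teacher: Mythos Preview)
Your proposal is correct and is exactly the argument the paper intends: the corollary is stated without proof, as it follows immediately from the preceding theorem by passing to the quotient $K^{-\infty}\to K^{-\infty}_t$ and observing that the images of the generators are precisely the listed matrices. Your count of the generators is also accurate.
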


Although we are able to produce generating sets for $M_3(K_t^{-\infty})$, for
some small values of $t$, using \GAP, we have not been able to recognise a
pattern in these matrices due to their large number and complexity. \GAP
produces irredundant generating sets for $M_3(K_t^{-\infty})$ with $19$, and
$78$ generators, respectively, when $t = 1$ and $2$.

\bibliography{tropical}{}
\bibliographystyle{plain}

\end{document}